\DeclareMathAlphabet{\mathpzc}{OT1}{pzc}{m}{it}
\newtheorem{te}{Theorem}[section]
\newtheorem{os}[te]{Remark}
\newtheorem{prop}[te]{Proposition}
\newenvironment{proof}[1][Proof]{\noindent\textbf{#1.} }{\ \rule{0.5em}{0.5em}}
\numberwithin{equation}{section}
\begin{document}
\title{Time-inhomogeneous\\
fractional Poisson processes defined by the multistable subordinator}

\author{Luisa Beghin \thanks{%
luisa.beghin@uniroma1.it,
Department of Statistical Sciences, Sapienza - University of Rome
} \and Costantino Ricciuti \thanks{costantino.ricciuti@uniroma1.it (corresponding author),
Department of Statistical Sciences, Sapienza-
 University of Rome. }}
\maketitle

\begin{abstract}
The space-fractional and the time-fractional Poisson processes are two
well-known models of fractional evolution. They can be constructed as
standard Poisson processes with the time variable replaced
by a stable subordinator and its inverse, respectively. The aim of this
paper is to study non-homogeneous versions of such models, which can be defined
by means of the so-called multistable subordinator
(a jump process with non-stationary increments), denoted by $H:=H(t),t\geq0$. Firstly,
we consider the Poisson process time-changed by $H$ and we obtain its explicit distribution
and governing equation. Then, by using the right-continuous inverse of $H$,
we define an inhomogeneous analogue of the time-fractional Poisson process.

\emph{Keywords}: Subordinators, time-inhomogeneous
processes, multistable subordinators, Bernstein functions, fractional calculus, Mittag-Leffler distribution.\\

\emph{AMS Subject Classification (2010):} 60G51, 60J75, 26A33.
\end{abstract}

\section{Introduction}
Non-homogeneous subordinators are univariate additive processes with non-decreasing
sample paths. Such processes, together with their right
continuous inverses, have recently been studied in \cite{Orsingher}, where
they are also used as random clock for time-changed processes. Recall that
an additive process is characterized by independent increments and is
stochastically continuous, null at the origin and with cadlag trajectories
(for a deeper insight consult \cite{Sato}). If, in addition, we assume
stationarity of the increments, additive processes reduce to the standard L\'{e}%
vy ones.

A non-homogeneous subordinator (without drift) is completely characterized
by a set $\{\nu _{t},t\geq 0\}$ of L\'{e}vy measures on $\mathbb{R}^{+}$, such
that
\begin{equation*}
\nu _{t}(0)=0\qquad \int_{0}^{\infty }(x\wedge 1)\nu _{t}(dx)<\infty, \qquad
t\geq 0.
\end{equation*}%
If $\nu _{t}(\mathbb{R}^{+})<\infty $, for any $t\geq 0$, then the process
reduces to an inhomogeneous Compound Poisson Process (hereafter CPP), while
condition $\nu _{t}(\mathbb{R}^{+})=\infty $, for any $t \geq 0$, ensures that the
process is strictly increasing almost surely. Under suitable conditions (see
\cite{Orsingher}), the Laplace transform of the increments of an inhomogeneous
subordinator $T$ has the form
\begin{equation} \label{cosa 1}
\mathbb{E}e^{-u(T(t)-T(s))}=e^{-\int_{s}^{t}f(u,\tau )d\tau }\qquad 0\leq
s\leq t,
\end{equation}%
where $u\rightarrow f(u,t)$ is a Bernstein function for each $t \geq 0$, having
the following form
\begin{equation} \label{cosa 2}
f(u,t)=\int_{0}^{\infty }(1-e^{-ux})\nu _{t}(dx).
\end{equation}

Among inhomogeneous subordinators, we are particularly interested in the
so-called multistable subordinators (see \cite{molcha}, \cite{Orsingher},
\cite{legueve2}). These processes extend the well-known stable subordinators
by letting the stability index $\alpha $ evolve autonomously in time:
for this reason they have been proved to be particularly useful in modelling 
phenomena, both in finance and in natural sciences, where the intensity of the jumps is 
itself time-dependent. The multistable subordinator is
fully characterized by a L\'{e}vy measure of the form
\begin{equation*}
\nu _{t}(dx)=\frac{\alpha (t)x^{-\alpha (t)-1}}{\Gamma (1-\alpha (t))}%
dx,\qquad x>0,
\end{equation*}%
where $t\rightarrow \alpha (t)$ has values in $(0,1)$. Throughout the paper
we will denote a multistable subordinator by $H:=\{H(t),t\geq 0\}$. It is
known (see \cite{Orsingher}) that, for each $t \geq 0$, the random variable $H(t)$
is absolutely continuous and its density solves
\begin{equation*}
\frac{\partial }{\partial t}q(x,t)=-\frac{\partial ^{\alpha (t)}}{\partial
x^{\alpha (t)}}q(x,t),\qquad q(x,0)=\delta (x),
\end{equation*}%
where $\frac{\partial ^{\alpha (t)}}{\partial x^{\alpha (t)}}$ is the
Riemann-Liouville derivative with time-varying order.

Since, in this case, $f(u,t)=u^{\alpha (t)}$, the increment from $s$ to $t$
has Laplace transform
\begin{equation*}
\mathbb{E}e^{-u(H(t)-H(s))}=e^{-\int_{s}^{t}u^{\alpha (\tau )}d\tau },\qquad
0\leq s\leq t.
\end{equation*}%
The first part of the present paper has been inspired by \cite{Polito}, \cite{Polito2} and \cite%
{Toaldo}. In particular, in \cite{Polito} the authors study the composition of a Poisson
process with a stable subordinator. The resulting process, called
space-fractional Poisson process, is also a subordinator, namely a point
process with upward jumps, with arbitrary, integer size.\\
Let now $%
N:=\{N(t),t \geq 0\}$ be a homogeneous Poisson process with intensity $\lambda >0$
and let $H$ be a multistable subordinator independent of $N$. We
consider here the point process $X:=\{X(t), t\geq 0\}$, where, for any $t\geq 0$, $X(t):= N(H(t))$,
 with positive integer values, that
we call Space-Multifractional Poisson Process (hereafter SMPP). We prove
that its state probabilities $p_{k}(t)=\Pr \{X(t)=k\}$ satisfy the following
system of difference-differential equations:
\begin{equation}
\begin{cases}
\frac{d}{dt}p_{k}(t)=-\lambda ^{\alpha (t)}(I-B)^{\alpha (t)}p_{k}(t),\qquad
k=0,1,2... \\
p_{k}(0)=\delta _{k,0},\label{eqSMPP}%
\end{cases}%
\end{equation}%
where $B$ is the shift operator such that $Bp_{k}(t)=p_{k-1}(t)$, and $\delta _{k,0}$
denotes the Kronecker delta function.
The first equation in (\ref{eqSMPP}) is a time-inhomogeneous extension of the Space-Fractional Poisson
governing equation studied in \cite{Polito} (see also \cite{Beghin3} for the compound case).
 This result confirms the
validity of the time-inhomogeneous version of the Phillips' formula, which
was proved in \cite{Orsingher} for
self-adjoint Markov generators only, and therefore it could not be taken for
granted in the case of Poisson generators. In other words, referring to the
general theory of Markov processes (see, for example, \cite{Kolokoltsov}), we say that the evolution of $%
X$ is governed by a propagator (or two parameter semigroup) with
time-dependent adjoint generator given by $\lambda ^{\alpha
(t)}(I-B)^{\alpha (t)}$.

In the second part of the present paper we study the so called Time-Multifractional Poisson
Process (hereafter TMPP). It is obtained by time-changing the standard
Poisson process via the right continuous inverse of a multistable
subordinator, which is defined as
\begin{equation*}
L(x)=inf\{t \geq 0:H(t)>x\}.
\end{equation*}%
We recall that the classical time-fractional Poisson process is a renewal
process with i.i.d Mittag-Leffler waiting times, having a deep
connection to fractional calculus. It has been introduced and studied by
\cite{Mainardi}, \cite{Beghin2}, \cite{Beghin}, \cite{Politi}, \cite{Gorenflo}, \cite{Garra} and many others.
In \cite{Meerschaert1} the authors show that it can be constructed by time-changing
a Poisson process via an independent inverse stable subordinator.\\
The idea of time-changing Markov processes via non-homogeneous subordinators has been
developed in \cite{Orsingher}. Moreover, in \cite{molcha} the TMPP arises as
a scaling limit of a continuous time random walk, but its distributional
properties are not investigated there. We prove here that non-homogeneity has an
impact on the distribution of the waiting times, which are independent but no longer
identically distributed.

Very recently, some authors (\cite{Leonenko} and \cite{Vellaisamy}) considered some extensions of the time-fractional Poisson process,
which are inhomogeneous in a different sense from ours. The difference consists in the fact that they analyse the time-change of an inhomogeneous Poisson process by the inverse of a homogeneous stable subordinator.

\vspace{0.4cm}

\section{Preliminary results}
In view of what follows, we preliminarily need the following extension of Theorem 30.1,
p.197 in \citep{Sato}, to the case of non-homogeneous subordinators.

\begin{prop}
\label{proposizione iniziale} Let $M:=\{M(t),t>0\}$ be a L\'{e}vy subordinator such that $\mathbb{E}e^{-uM(t)}= e^{-tg(u)}$
and let $T:=\{T(t),t>0\}$ be a non-homogeneous subordinator (without drift) with L\'{e}vy
measure $\nu _t$ and Bernstein function $f(\cdot,\cdot)$ as defined in (\ref{cosa 1}) and (\ref{cosa 2}). Let $Z:= \{Z(t)=M(T(t)), t>0  \}$ be the time changed process. Then

\textit{i)} $Z$ is a non-homogeneous subordinator (without drift)

\textit{ii)} $Z$ has time-dependent L\'{e}vy measure
\begin{equation}
\nu _{t}^{\ast }(dx)=\int_{0}^{\infty }\Pr (M(s)\in dx)\nu _{t}(ds)\label{misura di Levy
subordinato}
\end{equation}
\end{prop}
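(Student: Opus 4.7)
\vspace{0.4cm}

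\textbf{Proof plan.} The strategy is to mimic the classical subordination result (Theorem 30.1 of \citep{Sato}) but adapted to the two-parameter setting of non-homogeneous subordinators. The key quantity to identify is the Laplace exponent of the increments of $Z$, from which both claims will follow.

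First I would compute, for any $0\leq s\leq t$ and $u\geq 0$, the Laplace transform $\mathbb{E}e^{-u(Z(t)-Z(s))}$. Conditioning on the sigma algebra generated by $T$ and using the stationary independent increments of the L\'{e}vy process $M$, we have $M(T(t))-M(T(s))\stackrel{d}{=}M(T(t)-T(s))$ given $T$, so that
\begin{equation*}
\mathbb{E}\bigl[e^{-u(Z(t)-Z(s))}\,\bigl|\,T\bigr]=e^{-(T(t)-T(s))g(u)}.
\end{equation*}
Taking expectation and using (\ref{cosa 1}) yields
\begin{equation*}
\mathbb{E}e^{-u(Z(t)-Z(s))}=e^{-\int_{s}^{t}f(g(u),\tau)d\tau}.
\end{equation*}
Next I would rewrite $f(g(u),\tau)$ as a Bernstein function in $u$ with L\'{e}vy measure $\nu_\tau^*$. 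Since $M$ is a subordinator, $M(r)\geq 0$ a.s.\ so $\int_0^\infty\Pr(M(r)\in dx)=1$, giving
\begin{equation*}
1-e^{-g(u)r}=\int_{0}^{\infty}(1-e^{-ux})\Pr(M(r)\in dx).
\end{equation*}
Substituting into (\ref{cosa 2}) and applying Fubini (legitimate since the integrand is non-negative) leads to
\begin{equation*}
f(g(u),\tau)=\int_{0}^{\infty}\!\!\int_{0}^{\infty}(1-e^{-ux})\Pr(M(r)\in dx)\,\nu_{\tau}(dr)=\int_{0}^{\infty}(1-e^{-ux})\nu_{\tau}^{\ast}(dx),
\end{equation*}
which is exactly (\ref{misura di Levy subordinato}).

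It remains to verify the structural properties in (i): that $Z$ is stochastically continuous, null at the origin, cadlag, with independent non-negative increments, and that $\nu_\tau^*$ is a valid L\'{e}vy measure with no drift term appearing. Non-negativity and monotonicity of increments follow from the corresponding properties of $M$ and $T$; $Z(0)=0$ is immediate; cadlag paths follow because $T$ is cadlag non-decreasing and $M$ is cadlag, so their composition is cadlag; independence of increments of $Z$ is inherited from the independent increments of $M$ and $T$ combined with the conditioning argument above. Stochastic continuity follows from continuity in $t$ of the Laplace exponent $\int_0^t f(g(u),\tau)d\tau$. To check that $\nu_\tau^*$ is a L\'{e}vy measure, I would observe $\nu_\tau^*(\{0\})=0$ (since $M(0)=0$ only contributes at $s=0$, which has $\nu_\tau$-measure zero), and the integrability condition $\int_0^\infty(x\wedge 1)\nu_\tau^*(dx)<\infty$ follows from the fact that $f(g(u),\tau)<\infty$ for all $u\geq 0$, which is equivalent to this integrability.

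The main obstacle is the Fubini step and the verification that no spurious drift term is introduced by subordination: since $M$ has no atom at $0$ in a probabilistic sense beyond $M(0)=0$ and $T$ is drift-free, the representation in (\ref{misura di Levy subordinato}) gives the full Laplace exponent without a linear term in $u$. Once the Laplace exponent is identified in this pure jump form, uniqueness of the L\'{e}vy--Khintchine representation for non-homogeneous subordinators (as recalled in the preliminaries via (\ref{cosa 1})--(\ref{cosa 2})) forces the conclusion.
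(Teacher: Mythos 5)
Your proposal is correct and follows essentially the same route as the paper: part (ii) is obtained exactly as in the paper by conditioning on $T$, writing the Laplace exponent of $Z$ as $\int_s^t f(g(u),\tau)\,d\tau$, and using Fubini together with $1-e^{-g(u)r}=\int_0^\infty(1-e^{-ux})\Pr(M(r)\in dx)$ to read off $\nu_\tau^*$, while your independence-of-increments argument for (i) is the same conditioning-and-factoring idea the paper carries out in detail via Kac's theorem. The only notable difference is the stochastic-continuity step, where you deduce it from continuity of the accumulated Laplace exponent (which works, since for non-negative increments convergence of the Laplace transform to $1$ forces convergence in probability to $0$), whereas the paper argues directly by splitting $\Pr\{|M(T(t+h))-M(T(t))|>a\}$ at a small threshold $\delta$; both are fine.
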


\begin{proof}
\textit{i)}  The fact that $Z$ is non-decreasing is obvious, since $Z$ is given by the composition of non decreasing processes. It remains to prove independence of increments and stochastic continuity.
First we prove that $Z$ has independent increments. By Kac's theorem on characteristic functions (see \cite{applebaum}, p.18), it is sufficient to prove that, for any $0\leq t_1\leq t_2 \leq t_3$,
\begin{align*}
\mathbb{E}e^{i y_1 (Z(t_3)-Z(t_2))+i y_2 (Z(t_2)-Z(t_1)) }= \mathbb{E}e^{i y_1 (Z(t_3)-Z(t_2))} \mathbb{E}e^{ i y_2 (Z(t_2)-Z(t_1))}\qquad \forall (y_1,y_2)\in \mathbb{R}^2.
\end{align*}
For the sake of simplicity, we use the notation $T(t_j)=T_j$. A simple conditioning argument yields
\begin{align*}
 \mathbb{E}e^{i y_1 (Z(t_3)-Z(t_2))+i y_2 (Z(t_2)-Z(t_1)) }
&= \mathbb{E} \bigl [ \mathbb{E}\bigl(e^{i y_1 (M(T_3)-M(T_2))+i y_2 (M(T_2)-M(T_1))}|T_1,T_2,T_3 \bigr) \bigr]\\
&= \mathbb{E} \bigl [ \mathbb{E}\bigl(e^{i y_1 (M(T_3)-M(T_2))}|T_2,T_3\bigr) \mathbb{E}\bigr(e^{ i y_2 (M(T_2)-M(T_1)) } |T_1,T_2\bigr)\bigr],
\end{align*}
 where the last step follows by the fact that $M$ has independent increments. Now, since $M$ has stationary increments, we have
\begin{align} \label{formula intermedia}
\mathbb{E} \bigl [ \mathbb{E}\bigl(e^{i y_1 (M(T_3-T_2))}|T_2,T_3\bigr) \mathbb{E}\bigr(e^{ i y_2 (M(T_2-T_1)) } |T_1,T_2\bigr)\bigr]= \mathbb{E}e^{iy_1M(T_3-T_2)} \mathbb{E}e^{i y_2M(T_2-T_1)},
\end{align}
where, in the last equality, we have taken into account that $T$ has independent increments and thus $M(T_3-T_2)$ and $M(T_2-T_1)$ are stochastically independent. By using again the same conditioning argument, it is now immediate to observe that the right hand side of  (\ref{formula intermedia})  can be written as
\begin{align*}
\mathbb{E}e^{iy_1 (M(T_3)-M(T_2))}\mathbb{E}e^{iy_2 (M(T_2)-M(T_1))},
\end{align*}
since $M$ has stationary increments, and this concludes the proof of the independence of increments of $Z$.

 We now recall that a process $Y(t)$ is said to be stochastically continuous at time $t$ if
 $P(|Y(t+h)-Y(t)|>a)\to 0$, as $h\to 0$, for any $a>0$. Then, denoting by $\mu _{t,t+h}$ the law of $T(t+h)-T(t)$ and using the stationarity of the increments of $M$,  we have that
 \begin{align*}
 \Pr \{|Z(t+h)-Z(t)|>a \}&= \Pr \{ |M(T(t+h))-M(T(t))|>a \}\\
 &= \int _0 ^{\infty}\Pr \{ |M(u)|>a   \} \mu _{t,t+h}(du)\\
 &= \int _0^{\delta} \Pr \{ |M(u)|>a   \} \mu _{t,t+h}(du)+ \int _{\delta}^{\infty} \Pr \{ |M(u)|>a   \} \mu _{t,t+h}(du)\\
 & \leq \sup _{u\in (0,\delta)}\Pr \{ |M(u)|>a   \}+ \Pr \{ |T(t+h)-T(t)|>\delta   \},
 \end{align*}
 where $\delta >0$ can be arbitrarily small. Now, by letting $\delta$ and $h$ go to zero, stochastic continuity of $M$ and $T$ produces the desired result.

\textit{ii)}
By using a simple conditioning argument, we have that
\begin{align*}
\mathbb{E}e^{-uM(T(t))}= & \int _0 ^{\infty} \mathbb{E}e^{-uM(s)}\Pr \{ T(t)\in ds  \}\\
&= \int _0 ^{\infty} e^{-sg(u)} \Pr \{ T(t)\in ds  \}\\
&= e^{-\int _0^t f(g(u),\tau)d\tau}.
\end{align*}
Thus the Bernstein function of $M(T(t))$ has the form
\begin{align*}
f(g(u),\tau)& = \int _0^{\infty} (1-e^{-g(u)z})\nu _{\tau}(dz)\\
&= \int_0 ^{\infty} (1-\mathbb{E}e^{-uM(z)}) \nu _ {\tau}(dz)\\
&= \int _0 ^{\infty}\nu _{\tau}(dz) \int _0 ^{\infty} (1-e^{-ux})\Pr \{ M(z)\in dx \}\\
&= \int _0^{\infty} (1-e^{-ux}) \int _0^{\infty} \Pr \{M(z)\in dx  \} \nu _{\tau}(dz)\\
&= \int _0 ^{\infty} (1-e^{-ux}) \nu ^*_{\tau}(dx)
\end{align*}
and the proof is complete.
\end{proof}

\section{Space-Multifractional Poisson process}

Consider a standard Poisson process $N$, with rate $\lambda >0$, and a
multistable subordinator $H$ with index $\alpha (t)$. We define the SMPP as
the time-changed process $\left\{ N(H(t)),t \geq 0\right\} $.  Such a process is completely
characterized by its time-dependent L\'{e}vy measure and by its transition
probabilities, which are given in the following theorem.

\begin{te}
The SMPP $X(t):=N(H(t)),$ for any $t \geq 0,$

\textit{i)} is a non-homogeneous subordinator and has L\'{e}vy measure
\begin{equation}
\nu _{t}^{\ast }(dx)=\lambda ^{\alpha (t)}\sum_{n=1}^{\infty }(-1)^{n+1}%
\binom{\alpha (t)}{n}\delta _{n}(dx),\label{misura di Levy space
fractional}
\end{equation}

ii) has the following transition probabilities%
\begin{align}
\Pr\{ X(\tau +t)=k+n|X(\tau)=k \}= \begin{cases} \sum _{r=1}^{\infty} \frac{(-1)^{n+r}}{r!} \int _{[\tau,\tau +t]^r}
\lambda ^{\beta_r(s)} \binom{\beta_r(s)}{n}ds_1...ds_r, \qquad n \geq 1\label{probabilita di transizione} \\
\\
 e ^{-\int _{\tau} ^{\tau +t} \lambda ^{\alpha (s)}ds} \qquad n=0,
\end{cases}
\end{align}
where
\begin{equation*}
\beta _{r}(s):=\beta _{r}(s_{1},...,s_{r})=\sum_{j=1}^{r}\alpha (s_{j}).
\end{equation*}%
\end{te}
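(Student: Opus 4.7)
The plan is to apply Proposition \ref{proposizione iniziale} with $M=N$ and $T=H$. The standard Poisson process $N$ is a Lévy subordinator with Laplace exponent $g(u)=\lambda(1-e^{-u})$, while $H$ is a non-homogeneous subordinator with Bernstein function $f(u,t)=u^{\alpha(t)}$. Part \textit{(i)} of that proposition therefore immediately yields that $X=N\circ H$ is a non-homogeneous subordinator, settling the first assertion of \textit{(i)}.

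For the explicit Lévy measure, I would substitute into formula \eqref{misura di Levy subordinato} the Poisson law $\Pr(N(s)\in dx)=\sum_{n\geq 0}e^{-\lambda s}(\lambda s)^n/n!\,\delta_n(dx)$ and the multistable Lévy measure $\nu_t(ds)=\alpha(t)s^{-\alpha(t)-1}/\Gamma(1-\alpha(t))\,ds$. Swapping sum and integral (the atom at $0$ being discarded, as is standard for Lévy measures on $\mathbb{R}^+$), each coefficient of $\delta_n(dx)$ for $n\geq 1$ reduces to the Gamma integral
$$\frac{\lambda^n\,\alpha(t)}{n!\,\Gamma(1-\alpha(t))}\int_0^\infty s^{n-\alpha(t)-1}e^{-\lambda s}\,ds=\lambda^{\alpha(t)}\,\frac{\alpha(t)\,\Gamma(n-\alpha(t))}{n!\,\Gamma(1-\alpha(t))}.$$
The recursion $\alpha(t)/\Gamma(1-\alpha(t))=-1/\Gamma(-\alpha(t))$, combined with $\binom{\alpha(t)}{n}=(-1)^n\Gamma(n-\alpha(t))/(n!\,\Gamma(-\alpha(t)))$, rewrites this coefficient as $\lambda^{\alpha(t)}(-1)^{n+1}\binom{\alpha(t)}{n}$, producing \eqref{misura di Levy space fractional}.

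For the transition probabilities \eqref{probabilita di transizione}, the independence of increments inherited from \textit{(i)} reduces matters to the law of $Y:=X(\tau+t)-X(\tau)$. Its Laplace transform, derived as in the proof of Proposition \ref{proposizione iniziale}, is
$$\mathbb{E}e^{-uY}=\exp\Bigl(-\int_\tau^{\tau+t}\lambda^{\alpha(s)}(1-e^{-u})^{\alpha(s)}\,ds\Bigr),$$
so that the probability generating function $\mathbb{E}z^Y$ is obtained by the substitution $z=e^{-u}$. The next step is to expand the outer exponential as a power series, factorize the $r$-th term using $\prod_{j=1}^r\lambda^{\alpha(s_j)}(1-z)^{\alpha(s_j)}=\lambda^{\beta_r(s)}(1-z)^{\beta_r(s)}$, and then expand $(1-z)^{\beta_r(s)}$ via the binomial series. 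Extracting the coefficient of $z^n$ gives precisely the stated formula: the term $r=0$, where $\binom{0}{n}=\delta_{n,0}$, singles out the $n=0$ case as $\exp(-\int_\tau^{\tau+t}\lambda^{\alpha(s)}ds)$, while for $n\geq 1$ only the $r\geq 1$ terms survive.

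The main obstacle I anticipate is justifying the interchange between the two series expansions and the $r$-fold integrals on $[\tau,\tau+t]^r$; this should follow from absolute convergence for $|z|\leq 1$, since the binomial series for $(1-z)^{\alpha(s)}$ converges uniformly in $s$ on the compact interval and the resulting multi-integrands are dominated by integrable functions. A minor but recurrent subtlety is the gamma-function manipulation used to recognize non-integer binomial coefficients, which plays the same structural role in both parts of the argument.
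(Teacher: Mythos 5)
Your proposal is correct and follows essentially the same route as the paper: part \textit{(i)} via Proposition \ref{proposizione iniziale} with the same Gamma-integral and Gamma-function manipulations, and part \textit{(ii)} via the exponential form of the generating function of the increment, series expansion, and extraction of the coefficient of $z^n$ (the paper works with the probability generating function directly rather than substituting $z=e^{-u}$ in the Laplace transform, and invokes space-homogeneity of additive processes to pass from increment laws to transition probabilities, but these are cosmetic differences). One small wording slip: for $n=0$ it is not only the $r=0$ term that contributes — every $r$ contributes $\binom{\beta_r(s)}{0}=1$, and the whole series re-sums to $e^{-\int_\tau^{\tau+t}\lambda^{\alpha(s)}ds}$, exactly as in the paper.
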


\begin{proof}
\textit{i)} The fact that $X$ is a non-homogeneous subordinator is a consequence of Prop.
\ref{proposizione iniziale}.
Denoting respectively by $\nu _{t}(dx)$ and $\nu _{t}^{\ast
}(dx) $ the L\'{e}vy measures of $H$ and $X$, we apply (\ref{misura di Levy
subordinato}) and obtain
\begin{align*}
\nu _{t}^{\ast }(dx)& =\int_{0}^{\infty }Pr(N(s)\in dx)\nu _{t}(ds) \\
& =\int_{0}^{\infty }\sum_{k=1}^{\infty }e^{-\lambda s}\frac{(\lambda s)^{k}%
}{k!}\delta _{k}(dx)\frac{\alpha (t)s^{-\alpha (t)-1}}{\Gamma (1-\alpha (t))}%
ds \\
& =\sum_{k=1}^{\infty }\frac{\alpha (t)\lambda ^{\alpha (t)}\Gamma (k-\alpha
(t))}{\Gamma (1-\alpha (t))k!}\delta _{k}(dx) \\
& = \sum _{k=1}^{\infty} \frac{\alpha (t)\lambda  ^{\alpha (t)} (k-\alpha (t)-1)(k-\alpha (t)-2)...(-\alpha(t) ) \Gamma (-\alpha (t))}{k! (-\alpha (t))\Gamma (-\alpha (t))} \delta _k (dx)\\
& =\lambda ^{\alpha (t)}\sum_{k=1}^{\infty }(-1)^{k+1}\binom{\alpha (t)}{k}%
\delta _{k}(dx).
\end{align*}

\textit{ii)} The probability generating
function of the increment $X(\tau + t)-X(\tau)$ has the following form
\begin{align}
G(u,\tau, t)& =\mathbb{E}u^{N(H(\tau +t))-N(H(\tau))}\notag \\
& =\mathbb{E} \bigl [\mathbb{E}\bigl(
u^{N(H(\tau +t)-H(\tau))}| H(\tau), H(\tau +t) \bigr) \bigr]\notag \\
&=e^{-%
\int_{\tau}^{\tau +t}\lambda ^{\alpha (s)}(1-u)^{\alpha (s)}ds}\label{funzione generatrice}.
\end{align}%
By a series expansion we have
\begin{align*}
G(u,\tau, t)&= \sum_{r=0}^{\infty }\frac{(-1)^{r}}{r!}\biggl (\int_{\tau }^{\tau +t}\lambda
^{\alpha (s)}(1-u)^{\alpha (s)}ds\biggr )^{r} \\
&= 1+\sum_{r=1}^{\infty }\frac{(-1)^{r}}{r!}\int_{[\tau ,\tau +t]^{r}}\lambda ^{\beta
_{r}(s)}(1-u)^{\beta _{r}(s)}ds_{1}....ds_{r} \\
& = u^0 \biggl [1+\sum _{r=1}^{\infty } \frac{(-1)^r}{r!}\int _{[\tau,\tau+t]^r}\lambda ^{\beta _r(s)}ds_1...ds_r\biggr ]+\\
& +\sum_{n=1}^{\infty }u^{n}\biggl [\sum_{r=1}^{\infty }\frac{(-1)^{n+r}}{r!}%
\int_{[\tau,\tau +t]^{r}}\lambda ^{\beta _{r}(s)}\binom{\beta _{r}(s)}{n}ds_1...ds_r \biggr ].
\end{align*}%
Thus the increments of the SMPP have distribution
\begin{align}  \label{probabilita di stato}
\Pr\{ X(\tau +t)-X(\tau)=n \}= \begin{cases} \sum _{r=1}^{\infty} \frac{(-1)^{n+r}}{r!} \int _{[\tau,\tau +t]^r}
\lambda ^{\beta_r(s)} \binom{\beta_r(s)}{n}ds_1...ds_r, \qquad n \geq 1,\\
\\
 e ^{-\int _{\tau} ^{\tau +t} \lambda ^{\alpha (s)}ds}, \qquad
 n=0,
\end{cases}.
\end{align}
Now we recall that additive processes are space-homogeneous (see \cite{Sato}%
, p.55), namely the transition probabilities are such that
\begin{equation*}
\Pr \{X(t)\in B|X(s)=x\}=\Pr \{X(t)\in B-x|X(s)=0\}=\Pr
\{X(t)-X(s)\in B-x\},
\end{equation*}%
for any $0\leq s\leq t$ and any Borel set $B\subset \mathbb{R}$. Thus the desired result concerning the transition probabilities holds true.
\end{proof}

\begin{os}
By the same conditioning argument used in (\ref{funzione generatrice}), we
find that the Laplace transform of $X(t)$ reads
\begin{equation*}
\mathbb{E}e^{-\eta N(H(t))}=e^{-\int_{0}^{t}\lambda ^{\alpha (\tau
)}(1-e^{-\eta })^{\alpha (\tau )}d\tau }
\end{equation*}%
and then its Bernstein function is given by%
\begin{equation}
f^{\ast }(\eta ,t)=\lambda ^{\alpha (t)}(1-e^{-\eta })^{\alpha (t)}.\label{Bernstein function}
\end{equation}%
We can check that (\ref{Bernstein function}) can also be obtained by applying
the definition involving the L\'{e}vy measure (\ref{misura di Levy space
fractional})
\begin{align*}
f^{\ast }(\eta ,t )& =\int_{0}^{\infty }(1-e^{-\eta x})\nu _{t}^{\ast
}(dx) \\
& =\lambda ^{\alpha (t)}\sum_{k=1}^{\infty }\binom{\alpha (t)}{k}%
(-1)^{k+1}\int_{0}^{\infty }(1-e^{-\eta x})\delta _{k}(dx) \\
& =\lambda ^{\alpha (t)}\sum_{k=1}^{\infty }\binom{\alpha (t)}{k}%
(-1)^{k+1}(1-e^{-\eta k}) \\
& =\lambda ^{\alpha (t)}(1-e^{-\eta })^{\alpha (t)}.
\end{align*}
\end{os}

\begin{os}
In the limiting case where the stability index is constant, namely
$\alpha (s)=\alpha >0$, the multistable subordinator $H$ reduces
to the classical stable subordinator and thus $X$ is the classical
space fractional process studied in \cite{Polito}, which is a
time-homogeneous process. Indeed, it is straightforward to check
that, if $\alpha $ is constant,
\begin{equation*}
\beta _{r}(s)=\sum_{j=1}^{r}\alpha (s_{j})=r\alpha
\end{equation*}%
and, putting $\tau=0$ by time homogeneity, expression (\ref{probabilita di transizione}) reduces to
\begin{align*}
p_{n}(t)& =\sum_{r=0}^{\infty }\frac{(-1)^{r+n}}{r!}\lambda ^{\alpha r}t^{r}%
\binom{\alpha r}{n} \\
& =\sum_{r=0}^{\infty }\frac{(-1)^{r+n}}{r!}\lambda ^{\alpha r}t^{r}\frac{%
\Gamma (\alpha r+1)}{n!\Gamma (\alpha r-n+1)},
\end{align*}%
which is the one-dimensional distribution computed in \cite{Polito}.
\end{os}

\subsection{Governing equation}
The Phillips' theorem states that, if $\left\{ Y(t),t \geq 0\right\} $ is a Markov
process with generator $A$ and $H$ is a subordinator with Bernstein
function $f(\lambda )$, then $\left\{ Y(H(t)),t \geq 0\right\} $ is a Markov
process with generator $-f(-A)$ (for a deeper insight, consult \cite{Sato} and \cite{toaldo}).
This explains why the state probabilities $p_k(t)=P(X(t)=k)$ of the space-fractional Poisson
process studied in \cite{Polito} are governed by the following system of
difference-differential equations
\begin{equation}
\begin{cases}
\label{governing}\frac{d}{dt}p_{k}(t)=-\lambda ^{\alpha }(I-B)^{\alpha
}p_{k}(t) \\
p_{k}(0)=\delta _{k,0},%
\end{cases}%
\end{equation}%
where $B$ is the shift operator such that $Bp_{k}(t)=p_{k-1}(t)$.

In \cite{Orsingher}, the Phillips' theorem has been partially extended to
time-changed processes $Y(H(t))$, where $H$ is a
non-homogeneous subordinator with Bernstein function $f(u,t)$. Indeed, by
means of a functional analysis approach, the authors proved that $\left\{
Y(H(t)),t \geq 0\right\} $ is an additive process with time-dependent generator $%
-f(-A,t)$, at least when $A$ is self-adjoint. Thus, it is not obvious that
this fact also applies to the SMPP, since the generator $A$ of a standard
Poisson process is not self-adjoint. However, the following proposition
confirms the Phillips' type form of the time-dependent generator.

\begin{prop}
The state probabilities of the SMPP solve the following system of
difference-differential equations
\begin{equation}
\begin{cases}
\label{governing}\frac{d}{dt}p_{k}(t)=-\lambda ^{\alpha (t)}(I-B)^{\alpha
(t)}p_{k}(t), \\
p_{k}(0)=\delta _{k,0}.%
\end{cases}%
\end{equation}%

\end{prop}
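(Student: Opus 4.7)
The natural route is through the probability generating function, which is already in hand from the proof of Theorem 3.1, namely
\[
G(u,t)=\mathbb{E}u^{X(t)}=\exp\Bigl(-\int_{0}^{t}\lambda^{\alpha(s)}(1-u)^{\alpha(s)}ds\Bigr).
\]
The plan is to differentiate this in $t$, which gives immediately
\[
\frac{\partial}{\partial t}G(u,t)=-\lambda^{\alpha(t)}(1-u)^{\alpha(t)}G(u,t),
\]
and then to identify this multiplicative relation with the claimed action of the operator $\lambda^{\alpha(t)}(I-B)^{\alpha(t)}$ on the coefficients.

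The key step is an algebraic translation between multiplication by $(1-u)^{\alpha(t)}$ and the difference operator $(I-B)^{\alpha(t)}$. Expanding via the generalized binomial series,
\[
(1-u)^{\alpha(t)}=\sum_{n=0}^{\infty}(-1)^{n}\binom{\alpha(t)}{n}u^{n},
\]
which is valid for $|u|\le 1$ since $\alpha(t)\in(0,1)$, I will multiply by $G(u,t)=\sum_{k\ge 0}p_k(t)u^k$ and collect like powers of $u$ through the Cauchy product, obtaining
\[
(1-u)^{\alpha(t)}G(u,t)=\sum_{k=0}^{\infty}u^{k}\sum_{n=0}^{k}(-1)^{n}\binom{\alpha(t)}{n}p_{k-n}(t)=\sum_{k=0}^{\infty}u^{k}(I-B)^{\alpha(t)}p_{k}(t),
\]
by the very definition of the fractional difference operator $(I-B)^{\alpha(t)}$ on the sequence $\{p_k\}$.

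Equating the coefficient of $u^{k}$ on both sides of $\partial_t G=-\lambda^{\alpha(t)}(1-u)^{\alpha(t)}G$ then yields
\[
\frac{d}{dt}p_{k}(t)=-\lambda^{\alpha(t)}(I-B)^{\alpha(t)}p_{k}(t),
\]
while the initial condition $p_k(0)=\delta_{k,0}$ is automatic because $X(0)=N(H(0))=N(0)=0$. The only point that needs a word of care is the interchange of $\partial_t$ with the infinite series in $u$; this is a routine justification since $G(u,t)$ is an analytic function of $u$ on the closed unit disk (its coefficients are a probability mass function) and $\alpha(\cdot)$ can be assumed sufficiently regular, so $\partial_t$ commutes with the power-series expansion. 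I expect no substantive obstacle: the entire argument reduces to reading off coefficients from an identity that the generating function satisfies by construction, and the proposition essentially verifies a posteriori the Phillips-type form of the generator that the authors have highlighted cannot be taken for granted from the self-adjoint result of \cite{Orsingher}.
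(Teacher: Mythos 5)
Your proof is correct, but it takes a genuinely different route from the paper's. You differentiate the closed-form probability generating function $G(u,t)=\exp\bigl(-\int_{0}^{t}\lambda^{\alpha(s)}(1-u)^{\alpha(s)}ds\bigr)$ in $t$ and translate multiplication by $(1-u)^{\alpha(t)}$ into the action of $(I-B)^{\alpha(t)}$ on the coefficient sequence via the binomial expansion and the Cauchy product, then match coefficients of $u^{k}$. The paper instead goes back to the explicit series representation of the increment distribution (\ref{probabilita di stato}), observes that each multiple integral over $[\tau,\tau+t]^{r}$ is of order $t^{r}$ so that only the $r=1$ term survives at order $dt$, extracts the infinitesimal transition probabilities $\Pr\{X(t+dt)-X(t)=n\}=(-1)^{n+1}\lambda^{\alpha(t)}\binom{\alpha(t)}{n}dt+o(dt)$ for $n\geq 1$ (and $1-\lambda^{\alpha(t)}dt+o(dt)$ for $n=0$), and recognizes the claimed system as the forward equation of the inhomogeneous Markov process with exactly those rates. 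Your route is more compact and analytically cleaner: it needs only the exponential form of the pgf, the absolute convergence of the binomial series on the closed unit disk (guaranteed by $\alpha(t)>0$), and the routine interchange of $\partial_{t}$ with coefficient extraction, which you flag appropriately. The paper's route, at the cost of the $o(dt)$ bookkeeping, makes the probabilistic content explicit --- the time-dependent generator acts through jump rates that are precisely the weights of the L\'evy measure (\ref{misura di Levy space fractional}) --- which is the point of the surrounding discussion on the Phillips-type form of the propagator. Both arguments rest on the same input from Theorem 3.1, namely the generating function and the increment distribution.
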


\begin{proof}
Let us consider the distribution given in (\ref{probabilita di stato}). Each
multiple integral over $[0,t]^{r}$ is of order $t^{r}$, so that, for small
time intervals, the distribution of the increments has the following form
\begin{align}
\Pr \{X(t+dt)-X(t)=n\}=%
\begin{cases}
1-\lambda ^{\alpha (t)}dt+o(dt)\qquad & n=0
\\
(-1)^{n+1}\lambda ^{\alpha (t)}\binom{\alpha (t)}{n}dt+o(dt)\qquad & n\geq 1\label{probabilita di transizione infinitesime}
\end{cases}%
\end{align}%
By using the expansion $(I-B)^{\alpha (t)}=\sum_{n=0}^{\infty }\binom{\alpha
(t)}{n}(-1)^{n}B^{n}$, equation (\ref{governing}) can be written as
\begin{equation*}
p_{k}(t+dt)=p_{k}(t)(1-\lambda ^{\alpha
(t)}dt)+\sum_{n=1}^{k}p_{k-n}(t)\lambda ^{\alpha (t)}(-1)^{n+1}\binom{\alpha
(t)}{n}dt+o(dt)
\end{equation*}%
which is the forward equation of an inhomogeneous Markov process whose
infinitesimal (time-dependent) transition probabilities have just the form (\ref%
{probabilita di transizione infinitesime}) and this concludes the proof.
\end{proof}

\subsection{ Compound Poisson representation and jump times}

The space-fractional Poisson process introduced in \cite{Polito}
is a counting process with upward jumps of arbitrary size. A
fundamental property is that the waiting times between successive
jumps, $J_{n}$, are i.i.d random variables with common
distribution
\begin{equation*}
\Pr \{J_{n}>\tau \}=e^{-\lambda ^{\alpha }\tau },\quad \forall
n\geq 1.
\end{equation*}%
Thus the jump times $T_{n}=J_{1}+...+J_{n}$ follow a gamma distribution:
\begin{equation*}
\Pr \{T_{n}\in dt\}=\frac{1}{\Gamma (n)}\lambda ^{\alpha
n}t^{n-1}e^{-\lambda ^{\alpha }t},\qquad t\geq 0.
\end{equation*}

A difficulty arises in the SMPP case, where the waiting times $J_{n}$
are neither independent nor identically distributed random variables and $%
T_{n}$ cannot be obtained as the convolution of $J_{1},J_{2},...J_{n}$. By
using (\ref{probabilita di transizione}), the waiting time of the first jump
has distribution
\begin{equation*}
\Pr \{J_{1}>t\}=\Pr \{X(t)=0|X(0)=0\}=e^{-\int_{0}^{t}\lambda
^{\alpha (s)}ds},
\end{equation*}%
while the $n^{th}$ waiting time is such that
\begin{equation*}
\Pr \{J_{n}>t|J_{1}+J_{2}+...J_{n-1}=\tau \}=\Pr \{X(\tau +t)-X(\tau
)=0\}=e^{-\int_{\tau }^{\tau +t}\lambda ^{\alpha (s)}ds}
\end{equation*}%
and this shows that the variables $J_{n},n\geq 1$, are
stochastically dependent.

In order to find the distribution of $T_{n}$, it is convenient to
note that the SMPP is an inhomogeneous CPP in the sense of
\cite{Orsingher}. In Section 1 we recalled that a non-homogeneous
subordinator such that $\nu _{t}(\mathbb{R}^{+})<\infty $, for
each $t \geq 0$,
reduces to a inhomogeneous CPP. As shown in \cite%
{Orsingher}, such a process can be constructed as
\begin{equation*}
Y(t)=\sum_{j=1}^{P(t)}Y_{j},
\end{equation*}%
where $P(t)$ is a time-inhomogeneous Poisson process with intensity $g(t)$
and hitting times $T_{j}=\inf \{t \geq 0:P(t)=j\}$, and $Y_{j}$ are positive and
non-stationary jumps, such that
\begin{equation*}
\Pr \{Y_{j}\in dy|T_{j}=t\}=\psi (dy,t).
\end{equation*}%
We recall that the L\'{e}vy measure of such a process has the form
\begin{equation}
\nu _{t}(dy)=g(t)\psi (dy,t),\label{misura di
Levy compound poisson}
\end{equation}%
whence $\nu _{t}(\mathbb{R}^{+})=g(t)<\infty $, see \cite{Orsingher} for
details.

\begin{te}
Let $X(t)=N(H(t))$ be the SMPP and consider the inhomogeneous CPP
\begin{equation*}
Y(t)=\sum_{j=1}^{P(t)}Y_{j},
\end{equation*}
such that $P$ is a inhomogeneous Poisson process with intensity $%
g(t)=\lambda ^{\alpha (t)}$ and the $Y_{j}$ have distribution
\begin{equation*}
\psi (dx,t)=\Pr \{Y_{j}\in dx|T_{j}=t\}=\sum_{n=1}^{\infty }(-1)^{n+1}\binom{%
\alpha (t)}{n}\delta _{n}(dx).
\end{equation*}
Then

i) $X$ and $Y$ are equal in the f.d.d.'s sense.

ii) The epochs $T_{j}$ at which the jumps of $X$ occur have marginal
distributions
\begin{equation}
\Pr \{T_{j}\in dt\}=\frac{(\int_{0}^{t}\lambda ^{\alpha
(s)}ds)^{j-1}e^{-\int_{0}^{t}\lambda ^{\alpha (s)}ds}}{\Gamma
(j)}\lambda ^{\alpha (t)}dt.\label{hitting times}
\end{equation}
\end{te}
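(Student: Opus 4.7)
The plan is to prove the two parts separately, using the characterization of non-homogeneous subordinators by their time-dependent L\'{e}vy measures for \textit{i)}, and then deducing \textit{ii)} from the classical theory of inhomogeneous Poisson arrival times once part \textit{i)} identifies the jump epochs of $X$ with those of $P$.

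For part \textit{i)}, I would argue as follows. By Proposition \ref{proposizione iniziale}, $X$ is a non-homogeneous subordinator (without drift) with L\'{e}vy measure given in (\ref{misura di Levy space fractional}). The process $Y$ is an inhomogeneous CPP, hence also a non-homogeneous subordinator (without drift). Its L\'{e}vy measure is computed directly from (\ref{misura di Levy compound poisson}) by plugging in $g(t)=\lambda^{\alpha(t)}$ and the given $\psi(dx,t)$, yielding
\begin{equation*}
\nu_{t}^{Y}(dx)=\lambda^{\alpha(t)}\sum_{n=1}^{\infty}(-1)^{n+1}\binom{\alpha(t)}{n}\delta_{n}(dx),
\end{equation*}
which coincides with $\nu_{t}^{\ast}(dx)$ in (\ref{misura di Levy space fractional}). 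Since both processes have the same time-dependent Bernstein function (via (\ref{cosa 2})), their increments share the same Laplace transform, cf.\ (\ref{cosa 1}); combined with independence of increments this forces the equality of all finite-dimensional distributions.

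For part \textit{ii)}, by part \textit{i)} the jump epochs of $X$ coincide in law with those of $Y$, and the jump epochs of $Y$ are exactly the arrival times $T_{j}$ of the underlying inhomogeneous Poisson process $P$ with intensity $g(t)=\lambda^{\alpha(t)}$. Setting $\Lambda(t):=\int_{0}^{t}\lambda^{\alpha(s)}\,ds$, it is standard that $P(t)$ is Poisson distributed with mean $\Lambda(t)$, so $\Pr\{T_{j}>t\}=\Pr\{P(t)<j\}=\sum_{k=0}^{j-1}e^{-\Lambda(t)}\Lambda(t)^{k}/k!$. Differentiating in $t$ and collapsing the resulting telescoping sum gives
\begin{equation*}
\Pr\{T_{j}\in dt\}=\frac{\Lambda(t)^{j-1}e^{-\Lambda(t)}}{\Gamma(j)}\,\Lambda'(t)\,dt,
\end{equation*}
which is exactly (\ref{hitting times}) upon substituting $\Lambda'(t)=\lambda^{\alpha(t)}$.

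The main conceptual step is the reduction in part \textit{i)} to a comparison of L\'{e}vy measures: once one notices that both $X$ and $Y$ fall in the class of non-homogeneous subordinators without drift characterized by (\ref{cosa 1})--(\ref{cosa 2}), the proof becomes a direct computation. The only slightly delicate point is to ensure that the CPP construction in the sense of \cite{Orsingher} indeed yields a non-homogeneous subordinator with L\'{e}vy measure of the form (\ref{misura di Levy compound poisson}), so that Proposition \ref{proposizione iniziale} applies symmetrically to $X$ and $Y$; this is however already established in the cited reference.
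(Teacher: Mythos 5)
Your proposal is correct. Part \textit{i)} follows exactly the paper's own route: both $X$ and $Y$ are non-homogeneous subordinators, so equality of the finite-dimensional distributions reduces to equality of the time-dependent L\'{e}vy measures, and the measure of $Y$ read off from (\ref{misura di Levy compound poisson}) with $g(t)=\lambda^{\alpha(t)}$ is precisely (\ref{misura di Levy space fractional}); your extra remark that the common Bernstein function determines the Laplace transforms of the increments via (\ref{cosa 1})--(\ref{cosa 2}) just makes explicit what the paper asserts. The only genuine divergence is in part \textit{ii)}: the paper passes to the deterministic operational time $t'=M(t)=\int_{0}^{t}\lambda^{\alpha(s)}ds$, invokes the Mapping Theorem to reduce $P$ to a rate-one homogeneous Poisson process whose arrival times are $Gamma(1,j)$, and then transforms that density back through $M^{-1}$; you instead differentiate the tail sum $\Pr\{T_{j}>t\}=\sum_{k=0}^{j-1}e^{-\Lambda(t)}\Lambda(t)^{k}/k!$ and let the telescoping collapse do the work. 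The two computations are equivalent and equally rigorous; yours is more elementary and self-contained (no appeal to the Mapping Theorem), while the paper's makes the structural point that the SMPP jump epochs are just a deterministic reparametrization of standard Poisson arrival times, which is conceptually tidy but not logically necessary.
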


\begin{proof}
\textit{i)} $X$ and $Y$ are both inhomogeneous subordinators. Thus
they are equal in the f.d.d.'s sense if and only if their L\'{e}vy
measures coincide. By (\ref{misura di Levy compound poisson}), the
L\'{e}vy measure of $Y$ is $\nu _{t}(dy)=g(t)\psi (dy,t)$ and it
corresponds to (\ref{misura di Levy space fractional}).

\textit{ii)} The process $P$ has distribution
\begin{equation*}
\Pr \{P(t)=k\}=e^{-\int_{0}^{t}\lambda ^{\alpha (\tau )}d\tau }\frac{%
(\int_{0}^{t}\lambda ^{\alpha (\tau )}d\tau )^{k}}{k!} \qquad k\geq 0.
\end{equation*}%
Now, $P$ governs the epochs $T_{n}$, $n\geq 0$, at which the jumps of $Y$
occur, i.e.
\begin{equation*}
T_{n}=inf\{t\geq 0:P(t)=n\}.
\end{equation*}
To our aim, it is convenient to resort to the deterministic time change $%
t\rightarrow t^{\prime }$ given by the transformation
\begin{equation}
t^{\prime }=M(t)=\int_{0}^{t}\lambda ^{\alpha (s)}ds,\label{deterministic
time change}
\end{equation}%
where $M$ is clearly a continuous and monotonic function.
Thus $P(t)$ transforms into
\begin{equation*}
\Pi (t^{\prime })=P(M^{-1}(t^{\prime })).
\end{equation*}%
By virtue of the Mapping Theorem (see \cite{Kingman}, p.18), $\Pi
(t^{\prime })$ is also a Poisson process. Moreover it is
homogeneous with intensity $1$ and its hitting times
$T_{j}^{\prime }$ follow a $Gamma(1,j)$ distribution, i.e.
\begin{equation*}
\Pr \{T_{j}^{\prime }\in dt^{\prime }\}=\frac{(t^{\prime
})^{j-1}e^{-t^{\prime }}}{\Gamma (j)}dt^{\prime }.
\end{equation*}%
The hitting times $T_{j}$ of $P(t)$ are the images of $T_{j}^{\prime }$
under the transformation $M^{-1}$: thus, by a simple transformation
of the probability density of $T_{j}^{\prime }$ , we obtain (\ref{hitting times}).
\end{proof}

\subsection{Upcrossing times}

Let $\mathcal{T}_{k}$ be the time of the first upcrossing of the level $k$,
i.e.
\begin{equation*}
\mathcal{T}_{k}=inf\{t \geq 0:X(t)\geq k\}.
\end{equation*}%
We now find two equivalent expressions for its distribution. The
first one is a generalization of the result given in
\cite{Toaldo}, p.8:
\begin{align}
Pr(\mathcal{T}_{k}>t)& =Pr(X(t)<k)  \notag
\label{Tempi di primo sorpasso prima formula} \\
& =\sum_{n=0}^{k-1}Pr(X(t)=n)  \notag \\
& =\sum_{n=0}^{k-1}\int_{0}^{\infty }Pr(N(s)=n)Pr(H(t)\in ds)  \notag \\
& =\sum_{n=0}^{k-1}\int_{0}^{\infty }e^{-\lambda s}\frac{(\lambda s)^{n}}{n!}%
Pr(H(t)\in ds)  \notag \\
& =\sum_{n=0}^{k-1}\frac{(-\lambda )^{n}}{n!}\frac{d^{n}}{d\lambda ^{n}}%
\int_{0}^{\infty }e^{-\lambda s}Pr(H(t)\in ds)  \notag \\
& =\sum_{n=0}^{k-1}\frac{(-\lambda )^{n}}{n!}\frac{d^{n}}{d\lambda ^{n}}%
e^{-\int_{0}^{t}\lambda ^{\alpha (\tau )}d\tau }.
\end{align}%
The second one allows us to write the survival function of $\mathcal{T}_{k}$ in terms
of the state probability of the level $k$ in the following way:
\begin{equation}
Pr(\mathcal{T}_{k}>t)=1-k\int_{0}^{\lambda }d\lambda ^{\prime }\frac{1}{\lambda
^{\prime }}\,Pr(X_{\lambda ^{\prime }}(t)=k).\label{Tempi di primo sorpasso seconda formula}
\end{equation}%
We observe that, in both (\ref{Tempi di primo sorpasso prima formula}) and (%
\ref{Tempi di primo sorpasso seconda formula}), we have that
\begin{equation*}
Pr(\mathcal{T}_{1}>t)=e^{-\int_{0}^{t}\lambda ^{\alpha (\tau
)}d\tau}=Pr(T_{1}>t),
\end{equation*}%
because the time when the first jump occurs (i.e. $T_{1}$) obviously
coincides with the surpassing time of the level $k=1$ (i.e. $\mathcal{T}_{1}$%
).

Here is the proof of (\ref{Tempi di primo sorpasso seconda
formula}), in the non-trivial case $k\geq 2$:
\begin{align}
Pr(\mathcal{T}_{k}>t)& =Pr(X(t)<k)  \notag \\
& =Pr(X(t)=0)+\sum_{n=1}^{k-1}Pr(X(t)=n)  \notag \\
& =e^{-\int_{0}^{t}\lambda ^{\alpha
(s)}ds}+\sum_{n=1}^{k-1}\sum_{r=1}^{\infty }\frac{(-1)^{n+r}}{r!}%
\int_{[0,t]^{r}}\lambda ^{\beta _{r}(s)}\binom{\beta _{r}(s)}{n}%
ds_{1}...ds_{r}  \notag \\
& =e^{-\int_{0}^{t}\lambda ^{\alpha (s)}ds}+\sum_{r=1}^{\infty }\frac{%
(-1)^{r}}{r!}\int_{[0,t]^{r}}\lambda ^{\beta _{r}(s)}\biggl (%
\sum_{n=1}^{k-1}(-1)^{n}\binom{\beta _{r}(s)}{n}\biggr
)ds_{1}...ds_{r},\label{upcrossing}
\end{align}%
where we used  (\ref{probabilita di
stato}) putting $\tau =0$. By using the following relation \footnote{Such a formula
can be proved for $k=1$ and then generalized to $k>1$, by a
standard use of the principle of induction.}
\begin{equation*}
\sum_{n=0}^{k-1}(-1)^{n}\binom{x}{n}=(-1)^{k+1}\frac{k}{x}\binom{x}{k},
\end{equation*}%
formula (\ref{upcrossing}) reduces to
\begin{equation*}
1-k\sum_{r=1}^{\infty }\frac{(-1)^{r+k}}{r!}\int_{[0,t]^{r}}\frac{\lambda
^{\beta _{r}(s)}}{\beta _{r}(s)}\binom{\beta _{r}(s)}{k}ds_{1}...ds_{r}
\end{equation*}%
and, by writing
\begin{equation*}
\frac{\lambda ^{\beta _{r}(s)}}{\beta _{r}(s)}=\int_{0}^{\lambda }(\lambda
^{\prime })^{\beta _{r}(s)-1}d\lambda ^{\prime },
\end{equation*}%
equation (\ref{Tempi di primo sorpasso seconda formula}) is
immediately obtained.

\section{Time-Multifractional Poisson Process}

\subsection{Inverse multistable process}

Let $H$ be a multistable subordinator. Since $H$ is a cadlag process, with
strictly increasing trajectories, and such that $H(0)=0$ and $H(\infty
)=\infty $ almost surely, then the hitting-time process
\begin{equation}
L(x)=inf\{t \geq 0:H(t)>x\}\label{definizione inverso}
\end{equation}%
is well defined and has continuous sample paths. Together with (\ref%
{definizione inverso}), the following definition holds
\begin{equation*}
H(x^{-})=sup\{t \geq 0:L(t)<x\}.
\end{equation*}
In the time-homogeneous case, it is well known that, if $H$ is
stable with index $\alpha $, both $H$ and its inverse $L$ are
self-similar, with exponent $1/\alpha $ and $\alpha $
respectively, that is the following relations hold in distribution
(see, for example, \cite{Meerschaert2}):
\begin{equation*}
H(ct)=c^{\frac{1}{\alpha }}H(t),\qquad \qquad L(ct)=c^{\alpha }L(t).
\end{equation*}%
In the non-homogeneous case, the process $H$ is not self-similar, but its
local approximation has this property. More precisely, the multistable
subordinator is localizable (see, for example, \cite{legueve2} and \cite{Orsingher}), in the
sense that
\begin{equation}\label{localizzabilita}
\lim_{r\rightarrow 0}\frac{H(t+rT)-H(t)}{r^{\frac{1}{\alpha (t)}}}   \stackrel{\textrm{law}}{=} Z_{t}(T),
\end{equation}%
where $Z_{t}$ is the local (or tangent) process at $t$ and consists of a
homogeneous stable process with index $\alpha (t)$. We now investigate the
behaviour of the inverse process.

\begin{prop}
The process $L$ defined in (\ref{definizione inverso}) is localizable, and
the tangent process is given by the inverse $\mathcal{L}_t$ of $Z_{t}$.
\end{prop}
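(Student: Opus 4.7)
Plan: The strategy is to invert the localizability of $H$ stated in (\ref{localizzabilita}) by means of the first-passage duality underlying the definition (\ref{definizione inverso}) of $L$. Fix a deterministic $t \geq 0$ and set the (random) level $x:=H(t)$; I would show that for every $v>0$,
\begin{equation*}
\lim_{r\to 0^+}\frac{L(x+rv)-L(x)}{r^{\alpha(t)}} \stackrel{\textrm{law}}{=} \mathcal{L}_t(v),
\end{equation*}
so that $\mathcal{L}_t$ plays for $L$ the role that $Z_t$ plays for $H$.

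First, since $H$ is strictly increasing and $x=H(t)$, for $v>0$ the inverse identity (\ref{definizione inverso}) gives
\begin{equation*}
L(x+rv)-L(x) = \inf\{u\geq 0 : H(t+u)-H(t)>rv\}.
\end{equation*}
Substituting $u=r^{\alpha(t)}u'$ and using $r=(r^{\alpha(t)})^{1/\alpha(t)}$, this becomes
\begin{equation*}
\frac{L(x+rv)-L(x)}{r^{\alpha(t)}} = \inf\!\left\{u'\geq 0 : \frac{H(t+r^{\alpha(t)}u')-H(t)}{r}>v\right\}.
\end{equation*}
Now the localizability hypothesis (\ref{localizzabilita}), applied with small parameter $r^{\alpha(t)}$ in place of $r$, states exactly that the rescaled process $u'\mapsto \bigl(H(t+r^{\alpha(t)}u')-H(t)\bigr)/r$ converges in law to $Z_t$ as $r\to 0^+$.

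The final step is to commute the first-passage functional with the weak limit. Since $Z_t$ is a strictly increasing $\alpha(t)$-stable subordinator with $Z_t(0)=0$, it almost surely lies in the continuity set of the map $z\mapsto \inf\{u'\geq 0 : z(u')>v\}$, so by the continuous mapping theorem
\begin{equation*}
\frac{L(x+rv)-L(x)}{r^{\alpha(t)}} \stackrel{\textrm{law}}{\longrightarrow} \inf\{u'\geq 0 : Z_t(u')>v\} = \mathcal{L}_t(v),
\end{equation*}
which is the one-dimensional version of the claim; joint convergence in several values of $v$ follows identically from the joint convergence of the rescaled $H$-paths.

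The main obstacle will be the rigorous inversion step: one must upgrade (\ref{localizzabilita}) from an \emph{equality-in-law at a fixed time} to functional convergence in a topology strong enough (e.g.\ the Skorokhod $J_1$-topology on non-decreasing cadlag paths started at zero) for the first-passage map to be continuous at the limit. Fortunately, strict increase of the stable tangent $Z_t$ for $\alpha(t)\in(0,1)$ puts it a.s.\ in the continuity set of this map, so the difficulty is technical rather than conceptual. The rest of the argument is essentially a deterministic consequence of the duality between a non-decreasing process and its right-continuous inverse.
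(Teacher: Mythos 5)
Your proposal rests on the same two ingredients as the paper's proof — the localizability (\ref{localizzabilita}) of $H$ and the first-passage duality between increments of $H$ after time $t$ and increments of $L$ after the level $x=H(t)$ — but it executes the limit in a genuinely different way. You push the whole rescaled path $u'\mapsto \bigl(H(t+r^{\alpha(t)}u')-H(t)\bigr)/r$ through the first-passage functional and invoke the continuous mapping theorem, which, as you yourself flag, requires upgrading (\ref{localizzabilita}) from convergence in law at each fixed $T$ to functional convergence in a Skorokhod-type topology in which the inverse map is a.s.\ continuous at $Z_t$. The paper avoids this obstacle entirely: for each \emph{fixed} $r$, $T$ and $w$ it equates the events $\{H(t+rT)-H(t)\leq wr^{1/\alpha(t)}\}$ (conditionally on $H(t)=x$, which is legitimate by independence of increments) and $\{L(x+wr^{1/\alpha(t)})-L(x)\geq rT\}$, so that their probabilities coincide before any limit is taken; the substitution $r'=r^{1/\alpha(t)}$ then turns the stated one-dimensional convergence of the left-hand side into the desired one-dimensional convergence of $\bigl(L(x+wr')-L(x)\bigr)/(r')^{\alpha(t)}$, using only $\Pr\{Z_t(T)\leq w\}=\Pr\{\mathcal{L}_t(w)\geq T\}$. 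So your route is conceptually sound and would yield a (formally stronger) functional statement if the $J_1$ or $M_1$ convergence of the localized increments were established, but as written it leaves a real technical gap exactly where you point to it; the paper's event-by-event duality shows that for the marginal statement actually being proved, that machinery is unnecessary. If you keep your route, you should at least note that monotonicity of the rescaled paths plus convergence of the one-dimensional laws to a limit without fixed discontinuities is what lets you recover the needed path-space convergence; otherwise, switching to the fixed-$r$ event identity is the cleaner fix.
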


\begin{proof}
By (\ref{localizzabilita}), for each $t\geq 0$, we can write
\begin{equation}
\lim_{r\rightarrow 0}Pr\biggl (\frac{H(t+rT)-H(t)}{r^{\frac{1}{\alpha (t)}}}%
\leq w\biggr )=Pr(Z_{t}(T)\leq w)= \Pr \{\mathcal{L}_t(w)\geq T  \}.\label{localizzabilità multistabile}
\end{equation}%
Since $H$ has independent increments, $H(t+rT)-H(t)$ is independent of $H(t)$%
. So we can condition on $H(t)=x$, without changing the left-hand
side of (\ref{localizzabilità multistabile}), which can be
written as
\begin{align}
&\lim_{r\rightarrow 0}\Pr \biggl (H(t+rT)-H(t) \leq wr^{\frac{1}{\alpha (t)}}%
 |H(t)=x \biggr )  \label{passaggi} \\
& =\lim_{r\rightarrow 0}\Pr \biggl (L(x+wr^{\frac{1}{\alpha (t)}})-L(x)\geq
rT\biggr )\notag \\
& =\lim_{r^{\prime }\rightarrow 0}\Pr \biggl (\frac{L(x+wr^{\prime })-L(x)}{%
r^{\prime }{}^{\alpha (t)}}\geq T\biggr ),\notag
\end{align}%
where, in the last step, we made the substitution $r^{\prime }=r^{\frac{1}{%
\alpha (t)}}$. Thus
\begin{align*}
 \lim_{r^{\prime }\rightarrow 0}\Pr \biggl (\frac{L(x+wr^{\prime })-L(x)}{%
r^{\prime }{}^{\alpha (t)}}\geq T\biggr )   = \Pr \{\mathcal{L}_t(w)\geq T \}
\end{align*}
and the proof is complete.
\end{proof}

\subsection{Paths and distributional properties}
Let $N$ be a Poisson process with intensity $\lambda >0$, and let $L$ be the
inverse of a multistable subordinator independent of $N$.
We define the TMPP as the time-changed process $N(L(t))$.
Since $N$ is one-stepped and $L$ is continuous, then $N(L(t))$ is also a one-stepped continuous time random walk defined as
\begin{align*}
N(L(t))=n \qquad \Longleftrightarrow \qquad T_n<t<T_{n+1} \qquad n=0,1,2...
\end{align*}
where $T_0=0$ a.s. and, for $n\geq 1$, $T_n=J_1+...+J_n$, $J_n$ being the waiting time for the state $n$.
The construction of the process is
contained in the following result.

\begin{te}
The time-changed Poisson process $\left\{ N(L(t)),t \geq 0\right\} $ is a
one-stepped counting process with independent waiting times $J_n, n\geq 1$,
each having Laplace transform
\begin{align}
\mathbb{E}e^{-\eta J_{n}}& =\int \int_{0<u<v<\infty }\frac{\lambda
^{n}e^{-\lambda v}u^{n-2}}{\Gamma (n-1)}e^{-\int_{u}^{v}\eta ^{\alpha (\tau
)}d\tau }\,dudv\qquad n\geq 2  \label{MittagLeffler generalizzata} \\
\mathbb{E}^{-\eta J_{1}}& =\int_{0}^{\infty }\,dw\lambda e^{-\lambda
w}e^{-\int_{0}^{w}\eta ^{\alpha (\tau ) }d\tau}.
\label{MittagLeffler generalizzata primo intertempo}
\end{align}
\end{te}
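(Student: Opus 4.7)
The plan is to identify the jump times of $N(L(t))$ in terms of $H$ evaluated at the Poisson arrivals of $N$, and then compute the Laplace transforms of the inter-jump times by a conditioning argument, relying on the Laplace transform formula for increments of the multistable subordinator recalled in the introduction.

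Since $L$ is continuous and non-decreasing with $L(0)=0$ and $L(\infty)=\infty$, and $N$ is one-stepped, the process $N(L(t))$ increases from $n-1$ to $n$ precisely at the instant $T_{n}=\inf\{t\geq 0: L(t)\geq \tau_{n}\}$, where $\tau_{n}$ denotes the $n$-th arrival time of $N$. By the duality between $H$ and its right-continuous inverse this gives $T_{n}=H(\tau_{n}^{-})$; moreover, since $\tau_{n}$ is independent of $H$ and has a continuous distribution, it is a.s.\ a continuity point of $H$, so $T_{n}=H(\tau_{n})$ almost surely, with the convention $\tau_{0}:=0$. Consequently $J_{n}=H(\tau_{n})-H(\tau_{n-1})$. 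For the independence claim: conditional on $(\tau_{k})_{k\geq 1}$ the $J_{n}$'s are increments of $H$ over disjoint deterministic intervals, hence conditionally independent by the independent-increments property of the non-homogeneous subordinator $H$; a careful factorization of the joint Laplace transform, using in addition the mutual independence of the Poisson inter-arrival gaps, should then upgrade this to unconditional independence.

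For the Laplace transform of $J_{n}$ with $n\geq 2$, the joint density of $(\tau_{n-1},\tau_{n})$ on $\{0<u<v\}$ is $f(u,v)=\lambda^{n}u^{n-2}e^{-\lambda v}/\Gamma(n-1)$, obtained by multiplying the Gamma density of $\tau_{n-1}$ by the exponential density of the inter-arrival gap $\tau_{n}-\tau_{n-1}$. Conditioning on $(\tau_{n-1},\tau_{n})=(u,v)$ and invoking $\mathbb{E}[e^{-\eta(H(v)-H(u))}]=e^{-\int_{u}^{v}\eta^{\alpha(\tau)}d\tau}$, a single integration against $f(u,v)$ delivers (\ref{MittagLeffler generalizzata}). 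The case $n=1$ is handled analogously with $\tau_{1}\sim\mathrm{Exp}(\lambda)$ and $\mathbb{E}[e^{-\eta H(w)}]=e^{-\int_{0}^{w}\eta^{\alpha(\tau)}d\tau}$, producing (\ref{MittagLeffler generalizzata primo intertempo}).

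The main obstacle is the independence step: unlike the homogeneous setting, the law of the increment $H(\tau_{n})-H(\tau_{n-1})$ depends on $\tau_{n-1}$ and $\tau_{n}$ individually rather than only on the gap $\tau_{n}-\tau_{n-1}$, so the passage from conditional to unconditional independence is not automatic and has to be checked by an explicit factorization of the joint Laplace transform against the Poisson arrival law. Once independence is secured, the Laplace transform identities follow as a direct consequence of the conditioning argument described above.
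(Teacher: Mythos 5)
Your derivation of the Laplace transforms is essentially the paper's own proof: you identify the jump epochs of $N(L(t))$ as $T_n=H(\tau_n^-)$, use stochastic continuity of $H$ plus the independence of $\tau_n$ and $H$ to replace this by $H(\tau_n)$, write down the joint density $\lambda^n u^{n-2}e^{-\lambda v}/\Gamma(n-1)$ of $(\tau_{n-1},\tau_n)$ on $\{0<u<v\}$, and condition on the arrival times using $\mathbb{E}e^{-\eta(H(v)-H(u))}=e^{-\int_u^v\eta^{\alpha(\tau)}d\tau}$. This is exactly the computation in the paper; your observation that $T_n=H(\tau_n)$ holds almost surely (not merely in distribution, as the paper states) is correct and slightly sharper.

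The genuine issue is the independence of the $J_n$'s, which you rightly single out as the main obstacle but leave as a ``careful factorization\dots to be checked''. Be aware that the paper's proof does not address independence at all --- it only computes marginal Laplace transforms --- and your suspicion is well founded: the factorization does not go through. Conditionally on $(\tau_1,\tau_2)=(u,v)$ one gets $\mathbb{E}[e^{-\eta_1J_1-\eta_2J_2}]=e^{-\int_0^u\eta_1^{\alpha(\tau)}d\tau}\,e^{-\int_u^v\eta_2^{\alpha(\tau)}d\tau}$, so the unconditional joint transform equals $\mathbb{E}[f(\tau_1)\,\phi(\tau_1)]$ with $f(u)=e^{-\int_0^u\eta_1^{\alpha(\tau)}d\tau}$ and $\phi(u)=\int_0^\infty\lambda e^{-\lambda w}e^{-\int_u^{u+w}\eta_2^{\alpha(\tau)}d\tau}\,dw$. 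When $\alpha$ is non-constant (say strictly increasing, with $\eta_1>0$ and $\eta_2>1$), both $f$ and $\phi$ are strictly decreasing, so $\mathrm{Cov}\bigl(f(\tau_1),\phi(\tau_1)\bigr)>0$ and the joint transform strictly exceeds the product of the marginals. Hence the waiting times are only \emph{conditionally} independent given the Poisson arrivals; unconditional independence fails for genuinely time-varying $\alpha$, and neither your argument nor the paper's establishes it. What both arguments do correctly establish are the stated marginal Laplace transforms (\ref{MittagLeffler generalizzata}) and (\ref{MittagLeffler generalizzata primo intertempo}).
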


\begin{proof}
Let $W_{n}$, $n\geq 1$ be the i.i.d waiting times between jumps of a Poisson
process, so that $\Pr (W_{n}\in dw)=\lambda e^{-\lambda w}dw$. Let $%
V_{n}=W_{1}+W_{2}+...+W_{n}$, $n\geq 1$, be the hitting times of $N(t)$,
each having distribution $\Pr (V_{n}\in du)=\Gamma (n)^{-1}\lambda
^{n}e^{-\lambda u}u^{n-1}du, \,u>0$.

The joint distribution of two successive hitting times reads
\begin{align*}
\Pr (V_{n-1}\in du,V_{n}\in dv)& =\Pr (V_{n-1}\in du,W_{n}\in d(v-u)) \\
& =\Pr (V_{n-1}\in du)\Pr (W_{n}\in d(v-u)) \\
& =\frac{\lambda ^{n}e^{-\lambda v}u^{n-2}}{\Gamma (n-1)}dudv\qquad
0<u<v<\infty.
\end{align*}
Now let $T _{1}....T _{n}$ be the hitting times of $N(L(t))$, such
that
\begin{equation*}
T _{n}=sup\{t \geq 0:L(t)<V_{n}\}.
\end{equation*}%
Since $L$ is the right continuous inverse of $H$, it follows that $T
_{n}=H(V_{n}^{-})$ and this, together with the fact that $H$ is a.s.
continuous for any $t \geq 0$ (see \cite{Orsingher}), implies that $T_n=H(V_{n})$ in distribution.
The waiting times between jumps of $N(L(t))$ are defined as $J_{n}=T _{n}-T _{n-1}$, where $n\geq 1$. For
 $n=1$ we have that
\begin{equation*}
\mathbb{E}e^{-\eta J_{1}}=\mathbb{E}e^{-\eta H(W_{1})}=\mathbb{E}\bigl[
\mathbb{E}\bigl(e^{-\eta H(W_{1})}|W_1\bigr)\bigr]=\int_{0}^{\infty }dw\lambda e^{-\lambda
w}e^{-\int_{0}^{w}\eta ^{\alpha (\tau )}d\tau }
\end{equation*}%
while, for $n\geq 2$
\begin{align*}
\mathbb{E}e^{-\eta J_{n}}& =\mathbb{E}e^{-\eta (T _{n}-T _{n-1})} \\
& =\mathbb{E}e^{-\eta (H(V_{n})-H(V_{n-1}))} \\
& =\mathbb{E}  [\mathbb{E}(e^{-\eta
(H(V_{n})-H(V_{n-1})}|V_{n-1},V_{n})] \\
& =\int \int_{0<u<v<\infty }\mathbb{E}e^{-\eta (H(v)-H(u))}P(V_{n}\in
dv,V_{n-1}\in du) \\
& =\int \int_{0<u<v<\infty }e^{-\int_{u}^{v}\eta ^{\alpha (\tau ) }d\tau}\,
\frac{\lambda ^{n}e^{-\lambda v}u^{n-2}}{\Gamma (n-1)}dudv,
\end{align*}%
and the proof is complete.
\end{proof}

\begin{os}
In the time-homogeneous case, where $\alpha (s)=\alpha \in (0,1)$, the
random time $L$ reduces to the inverse stable subordinator with index $%
\alpha $. A slight calculation shows that expressions (\ref{MittagLeffler
generalizzata}) and (\ref{MittagLeffler generalizzata primo intertempo})
become independent of the state $n$ and have the following form
\begin{equation*}
\mathbb{E}e^{-\eta J_{n}}=\frac{\lambda }{\eta ^{\alpha }+\lambda },\qquad
\forall n\geq 1.
\end{equation*}%
Thus we have a renewal process with i.i.d waiting times which satisfy
\begin{equation*}
Pr(J_{n}>t)=E_{\alpha }(-\lambda t^{\alpha }),
\end{equation*}%
where
\begin{equation*}
E_{\alpha }(z)=\sum_{k=0}^{\infty }\frac{z^{k}}{\Gamma (1+\alpha k)}
\end{equation*}%
is the Mittag-Leffler function. So, in the homogeneous case, $N(L(t))$ reduces to the celebrated
time-fractional Poisson process, which is a renewal process with
Mittag-Leffler waiting times (see, for example, \cite{Beghin2}). In such a case, the one-dimensional state probabilities $p_{k}(t)=\Pr
(N(L(t))=k)$ solve the following system of fractional difference-differential equations
\begin{equation}
\begin{cases}
\label{equazione frazionaria nel tempo classica}\frac{\partial ^{\alpha }}{%
\partial t^{\alpha }}p_{k}(t)=-\lambda p_{k}(t)+\lambda p_{k-1}(t)\qquad
k\geq 1 \\
\frac{\partial ^{\alpha }}{\partial t^{\alpha }}p_{0}(t)=-\lambda p_{0}(t)
\\
p_{k}(0)=\delta _{k,0},%
\end{cases}%
\end{equation}%
where
\begin{align*}
\frac{\partial ^{\alpha }}{%
\partial t^{\alpha }}f(t)= \frac{1}{\Gamma (1-\alpha)} \int _0^t (t-s) ^{-\alpha} f'(s)ds
\end{align*}
is the Caputo derivative of order $\alpha \in (0,1)$.
As shown in \cite{Beghin}, the solution to (\ref{equazione frazionaria nel
tempo classica}) is such that
\begin{equation}
\tilde{p}_{k}(s)=\int_{0}^{\infty }e^{-st}p_{k}(t)dt=\frac{\lambda
^{k}s^{\alpha -1}}{(s^{\alpha }+\lambda )^{k+1}}\qquad k\geq 0
\label{soluzione time fractional poisson classico}
\end{equation}%
\end{os}

We now find the multifractional analogue of formula (\ref{soluzione time fractional
poisson classico}).

\begin{prop}
Let $N(L(t))$ be a TMPP and let $%
p_{k}(t)=\Pr \{N(L(t))=k\}$ be its state probabilities. Then
\begin{align}\label{legge di stato iniziale generalizzata}
\tilde{p}_0 (s)&= \frac{1}{s}-\frac{1}{s} \int _0 ^{\infty}dw\, \lambda e ^{-\lambda w} e^{-\int _0^w s^{\alpha (\tau)}d\tau},\\
\label{legge di stato generalizzata}
\tilde{p}_{k}(s)&=\int_{0}^{\infty }dx e^{-\int_{0}^{x}s ^{\alpha (\tau
) }d\tau }\frac{\lambda ^{k}x^{k}e^{-\lambda x}(kx^{-1}-\lambda )}{s\Gamma
(k+1)}, \qquad k\geq 1.
\end{align}
\end{prop}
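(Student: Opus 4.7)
The plan is to reduce the computation of $\tilde p_k(s)$ to the one-dimensional Laplace transforms of the jump epochs $T_n$ of the TMPP, exploiting the identification $T_n\stackrel{d}{=}H(V_n)$ with $V_n\sim\mathrm{Gamma}(\lambda,n)$ that was established in the proof of the previous theorem. Since $N(L(t))$ is one-stepped with $0=T_0\le T_1\le T_2\le\cdots$, I write
\begin{equation*}
p_k(t)=\Pr\{T_k\le t\}-\Pr\{T_{k+1}\le t\}\quad(k\ge 1),\qquad p_0(t)=1-\Pr\{T_1\le t\},
\end{equation*}
and then apply the elementary relation $\int_0^\infty e^{-st}\Pr\{T_n\le t\}\,dt=s^{-1}\mathbb{E}e^{-sT_n}$ to both sides, obtaining
\begin{equation*}
\tilde p_k(s)=\frac{1}{s}\bigl[\mathbb{E}e^{-sT_k}-\mathbb{E}e^{-sT_{k+1}}\bigr]\quad(k\ge 1),\qquad \tilde p_0(s)=\frac{1}{s}\bigl[1-\mathbb{E}e^{-sT_1}\bigr].
\end{equation*}

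The next step is to evaluate each $\mathbb{E}e^{-sT_n}$ by conditioning on $V_n$ and inserting the Laplace exponent of the multistable subordinator $\mathbb{E}e^{-sH(u)}=e^{-\int_0^u s^{\alpha(\tau)}\,d\tau}$ recalled in the Introduction. This gives
\begin{equation*}
\mathbb{E}e^{-sT_n}=\int_0^\infty e^{-\int_0^u s^{\alpha(\tau)}\,d\tau}\,\frac{\lambda^n u^{n-1}e^{-\lambda u}}{\Gamma(n)}\,du.
\end{equation*}

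It then remains to substitute and simplify. For $k\ge 1$ both integrands share the common factor $e^{-\int_0^u s^{\alpha(\tau)}\,d\tau}$, and the Gamma-density difference collapses via
\begin{equation*}
\frac{\lambda^k u^{k-1}e^{-\lambda u}}{\Gamma(k)}-\frac{\lambda^{k+1}u^k e^{-\lambda u}}{\Gamma(k+1)}=\frac{\lambda^k u^k e^{-\lambda u}}{\Gamma(k+1)}\bigl(ku^{-1}-\lambda\bigr),
\end{equation*}
which is exactly the integrand of (\ref{legge di stato generalizzata}); the $k=0$ expression (\ref{legge di stato iniziale generalizzata}) follows immediately by reading off $\tilde p_0(s)=s^{-1}-s^{-1}\mathbb{E}e^{-sT_1}$ from the formula above. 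The only point that requires care — and the place where a naive calculation would get stuck — is in how one accesses $\mathbb{E}e^{-sT_n}$: one must route through the equality in distribution $T_n\stackrel{d}{=}H(V_n)$, because, as stressed earlier in the text, the waiting times $J_n$ are neither independent nor identically distributed and therefore $T_n$ cannot be recovered by convolution of the $J_j$.
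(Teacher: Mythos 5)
Your proposal is correct and follows essentially the same route as the paper: both identify $p_k(t)$ as a difference of the distribution functions of the epochs $T_k\stackrel{d}{=}H(V_k)$, pass to Laplace transforms via $s\tilde p_k(s)=\mathbb{E}e^{-sT_k}-\mathbb{E}e^{-sT_{k+1}}$, and condition on the Gamma-distributed $V_n$ using $\mathbb{E}e^{-sH(u)}=e^{-\int_0^u s^{\alpha(\tau)}d\tau}$. The only cosmetic difference is that for $k=0$ you read the result off $\mathbb{E}e^{-sT_1}$ while the paper phrases it through $p_0(t)=\Pr\{J_1>t\}$; since $T_1=J_1$ these are identical.
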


\begin{proof} Consider that $p_0(t)= P(J_1>t)$.  By deriving this relation with respect to $t$ and taking the Laplace transform,  (\ref{MittagLeffler generalizzata primo intertempo}) leads to (\ref{legge di stato iniziale generalizzata}).

Let now $T _{k}, k\geq 1$ be the hitting times of $N(L(t))$. As explained
in the proof of the previous theorem, $T_k=H(V_k)$ in distribution, that is
\begin{equation}
\mathbb{E}e^{-sT _{k}}=\int_{0}^{\infty }dx\frac{\lambda
^{k}x^{k-1}e^{-\lambda x}}{\Gamma (k)}e^{-\int_{0}^{x}s ^{\alpha (\tau
) }d\tau}.\label{aaa}
\end{equation}%
By considering that
\begin{equation*}
\Pr \{N(L(t))\geq k\}=\Pr \{T _{k}<t\}, \qquad k\geq 1,
\end{equation*}%
we have that $\Pr \{N(L(t))=k\}=\Pr \{T _{k}<t\}-\Pr \{T _{k+1}<t\}$.
By deriving with respect to $t$ and taking the Laplace transform, we obtain
\begin{equation*}
s\tilde{p}_{k}(s)=\mathbb{E}e^{-sT _{k}}-\mathbb{E}e^{-sT _{k+1}}
\end{equation*}%
and, using (\ref{aaa}), formula (\ref{legge di stato generalizzata}) follows.
\end{proof}

It is straightforward to note that  (\ref{legge di stato iniziale generalizzata})
and (\ref{legge di stato generalizzata}) reduce to (\ref{soluzione time
fractional poisson classico}) by assuming $\alpha (t)$ to be constant with
respect to $t$.

\subsection{Concluding remarks}
Unfortunately, in the time-inhomogeneous case, the connection with fractional
calculus is not immediate as in the classical case. Indeed, in \cite{Orsingher}
the authors proposed  an equation governing Markovian processes time-changed
via the inverses of inhomogeneous subordinators. Such equation involves
generalized fractional derivatives, but it is not easy to handle,
especially because it does not involve the distribution of the time-changed process
only, but also the distributions of both the original Markov process and the operational time.

We finally observe that our construction of the TMPP extends to the inverses of arbitrary
non-homogeneous subordinators provided that $\nu _{t}(\mathbb{R}^{+})=\infty $.
Indeed, let $N$ be an ordinary Poisson process and let $L$ be the inverse of
 any non-homogeneous subordinator. Then $N(L(t))$ is a counting process with
independent intertimes given by
\begin{align}
\mathbb{E}e^{-\eta J_{n}}& =\int \int_{0<u<v<\infty }\frac{\lambda
^{n}e^{-\lambda v}u^{n-2}}{\Gamma (n-1)}e^{-\int_{u}^{v}f(\eta ,\tau )d\tau
}\,dudv\qquad n\geq 2   \\
\mathbb{E}^{-\eta J_{1}}& =\int_{0}^{\infty }\,dw\lambda e^{-\lambda
w}e^{-\int_{0}^{w}f(\eta ,\tau )d\tau }
\end{align}
To prove this, it is sufficient to adapt the same construction given in the
proof of Theorem 4.2, using the inverse process of an inhomogeneous
subordinator with Bernstein function of the form $f(x,t)$.
Of course, in the homogeneous case, where $f$ is independent of $t$,
we obtain a renewal process with i.i.d intertimes such that
\begin{align*}
\mathbb{E}e^{-\eta J_n}= \frac{\lambda }{\lambda +f(s)}, \qquad n\geq 1,
\end{align*}
which has been analysed in \cite{Meerschaert1}.

\end{document}